\title[Hodge symmetry and decomposition   on non-K\"ahler solvmanifolds]
{Hodge symmetry and decomposition on non-K\"ahler solvmanifolds}
\author{Hisashi Kasuya}
\theoremstyle{plain}
\newtheorem{theorem}{Theorem}[section] 
\theoremstyle{remark}
\newtheorem{remark}{Remark}
\theoremstyle{lemma}
\newtheorem{lemma}[theorem]{Lemma}
\theoremstyle{assumption}
\newtheorem{assumption}[theorem]{Assumption}
\theoremstyle{condition}
\newtheorem{condition}[theorem]{Condition}
\theoremstyle{definition}
\theoremstyle{proposition}
\newtheorem{proposition}[theorem]{Proposition}
\theoremstyle{corollary}
\newtheorem{corollary}[theorem]{Corollary}
\theoremstyle{remark}
\newtheorem{example}{Example}
\theoremstyle{assumption}
\newtheorem{Assumption}[theorem]{Assumption} 
\address[H.kasuya]{Graduate school of mathematical science university of tokyo japan }
\curraddr{}
\email{khsc@ms.u-tokyo.ac.jp}
\keywords{Dolbeault cohomology, solvmanifold, Hodge symmetry and decomposition}
\subjclass[2010]{22E25, 53C30,53C55}
\newcommand{\C}{\mathbb{C}}
\newcommand{\R}{\mathbb{R}}
\newcommand{\Z}{\mathbb{Z}}
\newcommand{\g}{\frak{g}}
\newcommand{\n}{\frak{n}}
\begin{document} 

\maketitle
\begin{abstract}
Let  $G=\C^{n}\ltimes_{\phi} \C^{m}$ with a semi-simple action $\phi: \C^{n}\to GL_{m}(\C)$ (not necessarily holomorphic).
Suppose $G$ has a lattice $\Gamma$.
Then we show that 
in  some conditions on  $G$ and $\Gamma$, $G/\Gamma$ admits a Hermitian metric such that the space of harmonic forms satisfies the Hodge symmetry and decomposition.
By this result  we give many examples of non-K\"ahler  Hermitian  solvmanifolds  satisfying the Hodge symmetry and decomposition.
\end{abstract}
\section{Introduction}
Let $M$ be a compact complex manifold with a Hermitian metric $g$.
We consider the complex-valued de Rham complex $(A^{\ast}(M),d)$, the Dolbeault complex $(A^{\ast,\ast}(M),\bar\partial)$, the space ${\mathcal H}^{\ast}_{d}(M,g)$ of complex-valued  $d$-harmonic forms and  the space ${\mathcal H}^{\ast,\ast}_{\bar \partial }(M,g)$ of complex-valued  $\bar \partial$-harmonic forms.
For the de Rham cohomology $H^{\ast}(M)$ and Dolbeault cohomology $H^{\ast,\ast}(M)$,  we have isomorphisms $H^{\ast}(M)\cong {\mathcal H}^{\ast}_{d}(M,g)$ and $H^{\ast,\ast}(M)\cong {\mathcal H}^{\ast,\ast}_{\bar \partial }(M,g)$ of linear spaces.
If $g$ is a K\"ahler metric, then we have
\[ \overline{{\mathcal H}^{p,q}_{\bar \partial }(M,g)}= {\mathcal H}^{q,p}_{\bar \partial }(M,g)\]
and
\[\bigoplus_{p+q=r} {\mathcal H}^{p,q}_{\bar \partial }(M,g) ={\mathcal H}^{r}_{d}(M,g).\]
These conditions are called the Hodge symmetry and decomposition.
The Hodge symmetry and decomposition implies the $dd^{c}$-lemma (i.e. the equation ${\rm Im}(d)\cap{\rm Ker} (d^{c})={\rm Im}(d^{c})\cap{\rm Ker} (d)={\rm Im}(d d^{c})$ holds for the operator $d^{c}=\sqrt{-1}(\partial -\bar \partial)$ )
see  \cite[(5.21)]{DGMS}.
The $dd^{c}$-lemma is very important.
It is applied to algebraic topology.
A compact complex manifold satisfying the $dd^{c}$-lemma is formal in the sense of Sullivan see \cite{DGMS}. 

We are interested in finding non-K\"ahler  compact Hermitian manifolds satisfying  the Hodge symmetry and decomposition.
This problem is not easy.
Consider nilmanifolds (i.e. compact quotients of simply connected nilpotent Lie groups by lattices).
There are many non-K\"ahler complex nilmanifolds.
But non-toral Hermitian  nilmanifolds do not satisfy the Hodge symmetry and decomposition, since non-toral   nilmanifolds are not formal in the sense of Sullivan see \cite{H}.

In this paper we consider solvmanifolds  (i.e. compact quotients of simply connected nilpotent Lie groups by lattices).
There are non-K\"ahler solvmanifolds which are formal in the sense of Sullivan.
In \cite{Ko}, we showed that a solvmanifold $G/\Gamma$ such that $G=\R^{n}\ltimes_{\phi} \R^{m}$ with a semi-simple action $\phi$ is formal in the sense of Sullivan  (see also \cite{Kas}).
In this paper we consider a Lie group $G$ as in the following assumption.
\begin{assumption}\label{as}
 $\bullet$ $G=\C^{n}\ltimes_{\phi} \C^{m}$ with a semi-simple action $\phi: \C^{n}\to GL_{m}(\C)$ (not necessarily holomorphic).
Then we have a coordinate $z_{1},\dots ,z_{n}, w_{1},\dots ,w_{m}$ of $\C^{n}\ltimes_{\phi} \C^{m}$ such that 
\[\phi(z_{1},\dots ,z_{n})(w_{1},\dots ,w_{m})=(\alpha_{1} w_{1},\dots ,\alpha_{m} w_{m})
\]
where $\alpha_{i}$ are $C^{\infty}$-characters of $\C^{n}$.\\
$\bullet$ $G$ has a lattice $\Gamma$.
Then a lattice $\Gamma$ of  $G=\C^{n}\ltimes_{\phi} \C^{m}$ is the form $\Gamma^{\prime}\ltimes_{\phi} \Gamma^{\prime\prime}$ such that $\Gamma^{\prime}$ and $\Gamma^{\prime\prime}$ are lattices of $\C^{n}$ and $\C^{m}$ respectively and the action $\phi$ of $\Gamma^{\prime}$ preserves $\Gamma^{\prime\prime}$.

\end{assumption}
Under some condition, we show that a complex solvmanifold $G/\Gamma$ admits a Hermitian metric satisfying the Hodge symmetry and decomposition. 
By this result we give many examples of non-K\"ahler  Hermitian  solvmanifolds  satisfying the Hodge symmetry and decomposition.

\section{Dolbeault cohomology of solvmanifolds}\label{MTT}
In this section we review the result in \cite{Kd}.
Let $G$ be a Lie group as in the following assumption.
\begin{Assumption}\label{Ass}
$G$ is the semi-direct product $\C^{n}\ltimes _{\phi}N$ with a left-invariant complex structure $J=J_{\C}\oplus J_{N}$ so that:\\
(1) $N$ is a simply connected nilpotent Lie group with a left-invariant complex structure $J_{N}$.\\
Let $\frak a$ and $\n$ be the Lie algebras of $\C^{n}$ and $N$ respectively.\\
(2) For any $t\in \C^{n}$, $\phi(t)$ is a holomorphic automorphism of $(N,J_{N})$.\\
(3) $\phi$ induces a semi-simple action on the Lie algebra $\n$ of $N$.\\
(4) $G$ has a lattice $\Gamma$. (Then $\Gamma$ can be written by $\Gamma=\Gamma^{\prime}\ltimes_{\phi}\Gamma^{\prime\prime}$ such that $\Gamma^{\prime}$ and $\Gamma^{\prime\prime}$ are  lattices of $\C^{n}$ and $N$ respectively and for any $t\in \Gamma^{\prime}$ the action $\phi(t)$ preserves $\Gamma^{\prime\prime}$.) \\
(5) The inclusion $\bigwedge^{\ast,\ast}\n_{\C}^{\ast} \subset A^{\ast,\ast}(N/\Gamma^{\prime\prime})$ induces an isomorphism 
\[H^{\ast,\ast}_{\bar\partial}(\n)\cong H^{\ast,\ast}_{\bar\partial }(N/\Gamma^{\prime\prime})\]
where  $\bigwedge^{\ast,\ast} \n^{\ast}_{\C}$ is the differential bigraded algebra   of the complex valued left-invariant differential forms on the nilmanifold $N/\Gamma^{\prime\prime}$.
\end{Assumption}

Consider the decomposition $\n\otimes {\C}=\n_{1,0}\oplus \n_{0,1}$ associated with $J_{N}$.
By the condition (2), this decomposition is a direct sum of $\C^{n}$-modules.
By the condition (3) we have a basis $Y_{1},\dots ,Y_{m}$ of $\n^{1,0}$ such that the action $\phi$ on $\n_{1,0}$ is represented by
$\phi(t)={\rm diag} (\alpha_{1}(t),\dots, \alpha_{m} (t))$.
Since $Y_{j}$ is a left-invariant vector field on $N$,
the vector field $\alpha_{j}Y_{j}$ on $\C^{n}\ltimes _{\phi} N$ is  left-invariant.
Hence we have a basis $X_{1},\dots,X_{n}, \alpha_{1}Y_{1},\dots ,\alpha_{m}Y_{m}$ of $\g_{1,0}$.
Let $x_{1},\dots,x_{n}, \alpha^{-1}_{1}y_{1},\dots ,\alpha_{m}^{-1}y_{m}$ be the  basis of $\g^{\ast}_{1,0}$ which is dual to $X_{1},\dots,X_{n}, \alpha_{1}Y_{1},\dots ,\alpha_{m}Y_{m}$.
Then we have 
\[\bigwedge ^{p,q}\g^{\ast}_{\C}=\bigwedge ^{p}\langle x_{1},\dots ,x_{n}, \alpha^{-1}_{1}y_{1},\dots ,\alpha^{-1}_{m}y_{m}\rangle\otimes \bigwedge ^{q}\langle \bar x_{1},\dots ,\bar x_{n}, \bar\alpha^{-1}_{1}\bar y_{1},\dots ,\bar\alpha^{-1}_{m}\bar y_{m}\rangle.
\]

\begin{lemma}{\rm (\cite[Lemma 2.2]{Kd})}\label{charr}
Let  $\alpha:\C^{n}\to \C^{\ast}$ be a  $C^{\infty}$-character of $\C^{n}$.
There exists a unique unitary character $\beta$ such that $\alpha \beta^{-1}$ is holomorphic.
\end{lemma}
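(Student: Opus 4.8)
The plan is to write $\alpha$ as $\exp$ of an $\R$-linear functional on $\C^{n}$ and then split that functional into its $\C$-linear and $\C$-antilinear parts; the unitary character $\beta$ will be built out of the antilinear part.

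First I would lift $\alpha$ through the exponential map. Since $\exp\colon\C\to\C^{\ast}$ is the universal covering homomorphism and the vector group $\C^{n}$ is simply connected, the $C^{\infty}$-character $\alpha\colon\C^{n}\to\C^{\ast}$ factors as $\alpha=\exp\circ f$ for a $C^{\infty}$-homomorphism $f\colon\C^{n}\to\C$, and a $C^{\infty}$-homomorphism between vector groups is $\R$-linear. Such an $f$ is unique as an $\R$-linear map: if $\exp\circ f=\exp\circ f'$ then $f-f'$ is $\R$-linear with values in $2\pi\sqrt{-1}\,\Z$, hence identically $0$. In the coordinates $z_{1},\dots,z_{n}$ write $f(z)=\sum_{j}a_{j}z_{j}+\sum_{j}b_{j}\bar z_{j}$ with $a_{j},b_{j}\in\C$, and put $f^{1,0}(z)=\sum_{j}a_{j}z_{j}$ (the $\C$-linear part) and $f^{0,1}(z)=\sum_{j}b_{j}\bar z_{j}$ (the $\C$-antilinear part), so $f=f^{1,0}+f^{0,1}$.

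Next I would set
\[\beta(z):=\exp\bigl(f^{0,1}(z)-\overline{f^{0,1}(z)}\bigr).\]
Its exponent is $\R$-linear and purely imaginary, so $\beta$ is a unitary character of $\C^{n}$; and since $\overline{f^{0,1}(z)}=\sum_{j}\bar b_{j}z_{j}$ is $\C$-linear,
\[\alpha\beta^{-1}=\exp\bigl(f^{1,0}(z)+\overline{f^{0,1}(z)}\bigr)=\exp\Bigl(\sum_{j}(a_{j}+\bar b_{j})\,z_{j}\Bigr)\]
is holomorphic. This proves existence. For uniqueness, suppose $\beta_{1},\beta_{2}$ are unitary characters with $\alpha\beta_{1}^{-1}$ and $\alpha\beta_{2}^{-1}$ both holomorphic. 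Then $\beta_{1}\beta_{2}^{-1}=(\alpha\beta_{2}^{-1})(\alpha\beta_{1}^{-1})^{-1}$ is a holomorphic map $\C^{n}\to\C^{\ast}$ of constant modulus $1$; by the maximum principle it is constant, and being a homomorphism it equals the constant $1$, so $\beta_{1}=\beta_{2}$.

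The argument is short; the only real inputs are the lifting property of the covering $\exp\colon\C\to\C^{\ast}$ (using that $\C^{n}$ is simply connected) and the splitting of $\R$-linear functionals into $\C$-linear and $\C$-antilinear parts. The one thing to be careful about is the bookkeeping of that splitting: the term $\overline{f^{0,1}}$ is itself $\C$-linear, so it must be collected into the holomorphic factor $\alpha\beta^{-1}$ and not left in $\beta$; moving it to the wrong side would destroy unitarity of $\beta$.
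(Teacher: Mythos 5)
Your proof is correct and complete; the paper itself states this lemma without proof (citing \cite[Lemma 2.2]{Kd}), and your argument --- lifting $\alpha$ through $\exp$ to an $\R$-linear functional, splitting off the $\C$-antilinear part to build $\beta$, and using constancy of a unimodular holomorphic function for uniqueness --- is the standard one and matches the argument in the cited source. No gaps.
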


By this lemma, take the unique unitary characters $\beta_{i}$ and $\gamma_{i}$ on $\C^{n}$ such that $\alpha_{i}\beta_{i}^{-1}$ and $\bar\alpha\gamma^{-1}_{i}$ are holomorphic.

\begin{theorem}{\rm (\cite[Corollary 4.2]{Kd})}\label{CORR}
Let  $B^{\ast,\ast}_{\Gamma}\subset A^{\ast,\ast}(G/\Gamma)$ be the differential bigraded subalgebra of $A^{\ast,\ast}(G/\Gamma)$ given by
\[B^{p,q}_{\Gamma}=\left\langle x_{I}\wedge  \alpha^{-1}_{J}\beta_{J}y_{J}\wedge \bar x_{K}\wedge \bar \alpha^{-1}_{L}\gamma_{L}\bar y_{L}{\Big \vert} \begin{array}{cc}\vert I\vert+\vert J\vert=p,\, \vert K\vert+\vert L\vert=q \\ (\beta_{J}\gamma_{L})_{\vert_{\Gamma}}=1\end{array}\right\rangle.
\]
Then  the inclusion $B^{\ast,\ast}_{\Gamma}\subset A^{\ast,\ast}(G/\Gamma)$ induces a cohomology isomorphism
\[H^{\ast,\ast}_{\bar \partial}(B^{\ast,\ast}_{\Gamma})\cong H^{\ast,\ast}_{\bar \partial}(G/\Gamma).
\]

\end{theorem}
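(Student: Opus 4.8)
The plan is to deduce Theorem \ref{CORR} from a fibration argument. The semidirect product structure gives a holomorphic fiber bundle $p\colon G/\Gamma\to T:=\C^{n}/\Gamma^{\prime}$ with fiber the nilmanifold $N/\Gamma^{\prime\prime}$, and the point will be that, after diagonalizing the monodromy, only the ``untwisted'' pieces of the fiberwise Dolbeault cohomology survive --- and those are precisely the generators of $B^{\ast,\ast}_{\Gamma}$. Concretely, I would filter $A^{\ast,\ast}(G/\Gamma)$ by the horizontal degree (the total number of $x_{i},\bar x_{i}$ factors), obtaining a spectral sequence converging to $H^{\ast,\ast}_{\bar\partial}(G/\Gamma)$ whose $E_{1}$-term is assembled from the fiberwise Dolbeault cohomology of $N/\Gamma^{\prime\prime}$. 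Here conditions (2), (3), (5) of Assumption \ref{Ass} all enter: (2) makes $p$ holomorphic, (5) replaces the fiberwise Dolbeault cohomology of $N/\Gamma^{\prime\prime}$ by the finite-dimensional $H^{\ast,\ast}_{\bar\partial}(\n)$, and the semi-simple eigenbasis $Y_{1},\dots,Y_{m}$ of (3) diagonalizes the $\C^{n}$-action so that the $E_{1}$-term decomposes as a direct sum, indexed by the monomials $\alpha^{-1}_{J}y_{J}\wedge\bar\alpha^{-1}_{L}\bar y_{L}$, of Dolbeault complexes of $T$ twisted by the $C^{\infty}$ line bundle attached to the character $\alpha^{-1}_{J}\bar\alpha^{-1}_{L}$.

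Next I would apply Lemma \ref{charr}: writing $\alpha_{i}=(\alpha_{i}\beta_{i}^{-1})\beta_{i}$ and $\bar\alpha_{i}=(\bar\alpha_{i}\gamma_{i}^{-1})\gamma_{i}$ with holomorphic, resp. anti-holomorphic, first factors and unitary $\beta_{i},\gamma_{i}$, the (anti-)holomorphic factors are absorbed into the holomorphic change of frame $\alpha^{-1}_{J}y_{J}\rightsquigarrow\alpha^{-1}_{J}\beta_{J}y_{J}$ that appears in the statement, leaving the flat \emph{unitary} line bundle $L_{\beta_{J}\gamma_{L}}$ over $T$. The decisive computation is then a vanishing lemma for the torus: for a unitary character $\chi$ of $\C^{n}$,
\[
H^{\ast,\ast}_{\bar\partial}(T,L_{\chi})=
\begin{cases}
H^{\ast,\ast}_{\bar\partial}(T) & \text{if }\chi_{\vert_{\Gamma^{\prime}}}=1,\\
0 & \text{if }\chi_{\vert_{\Gamma^{\prime}}}\neq 1.
\end{cases}
\]
I would prove this by Fourier analysis with the flat metric on $T$: a Fourier mode $e^{2\pi\sqrt{-1}\langle\mu,z\rangle}$ times a constant form is $\bar\partial$-harmonic only when $\mu=0$, and the value $\mu=0$ occurs among the Fourier characters of $L_{\chi}$ precisely when $\chi$ is trivial on $\Gamma^{\prime}$.

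Assembling the two ingredients, the $E_{1}$-term collapses to $\bigoplus H^{\ast,\ast}_{\bar\partial}(T)\otimes\langle\alpha^{-1}_{J}\beta_{J}y_{J}\wedge\bar\alpha^{-1}_{L}\gamma_{L}\bar y_{L}\rangle$, summed over the multi-indices $J,L$ with $(\beta_{J}\gamma_{L})_{\vert_{\Gamma}}=1$; since $B^{\ast,\ast}_{\Gamma}$ contains the invariant forms of $T$ and is $\bar\partial$-stable by construction, this is exactly $H^{\ast,\ast}_{\bar\partial}(B^{\ast,\ast}_{\Gamma})$, and because the inclusion $B^{\ast,\ast}_{\Gamma}\subset A^{\ast,\ast}(G/\Gamma)$ respects the horizontal filtration and induces this isomorphism on $E_{1}$, it induces the asserted isomorphism on cohomology. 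The main obstacle is the identification of the $E_{1}$-page: because the characters $\alpha_{i}$ need not be holomorphic in the base variable, $p$ is not holomorphically locally trivial, so one cannot simply quote the Borel/Leray spectral sequence for holomorphic fiber bundles; instead the horizontal filtration of $A^{\ast,\ast}(G/\Gamma)$ must be analyzed directly and its $E_{1}$-page identified through a fiberwise $\bar\partial$-harmonic analysis that genuinely relies on condition (5). This, together with the elementary but essential torus vanishing lemma, is the technical heart of the argument.
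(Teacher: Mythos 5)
First, a caveat: the paper contains no proof of Theorem \ref{CORR} --- it is imported verbatim from \cite[Corollary 4.2]{Kd} --- so the only thing to compare your attempt against is the argument of \cite{Kd} itself. Your overall strategy is the right one and, in outline, is the mechanism behind that reference: view $G/\Gamma$ as a bundle over the torus $T=\C^{n}/\Gamma^{\prime}$ with nilmanifold fiber, filter the Dolbeault complex by horizontal degree, use condition (5) to replace the fiberwise Dolbeault cohomology by $H^{\ast,\ast}_{\bar\partial}(\n)$, diagonalize the monodromy by condition (3), strip off the holomorphic factors $\alpha_{i}\beta_{i}^{-1}$ and $\bar\alpha_{i}\gamma_{i}^{-1}$ via Lemma \ref{charr}, and kill every summand with $(\beta_{J}\gamma_{L})_{\vert_{\Gamma}}\neq 1$ by the vanishing of the Dolbeault cohomology of $T$ with values in a flat line bundle attached to a unitary character that is nontrivial on $\Gamma^{\prime}$. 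The very form of the constraint $(\beta_{J}\gamma_{L})_{\vert_{\Gamma}}=1$ in the statement is the trace of exactly this reduction. Your torus vanishing lemma is correct, as is the observation that the bundle is not holomorphically locally trivial so Borel's spectral sequence cannot simply be quoted. (One harmless slip: the collapse you describe happens on the $E_{2}$-page, not $E_{1}$; the $E_{1}$-differential is the horizontal $\bar\partial$, and it is its cohomology that equals $\bigoplus H^{\ast,\ast}_{\bar\partial}(T,L_{\beta_{J}\gamma_{L}})\otimes\langle \alpha_{J}^{-1}\beta_{J}y_{J}\wedge\bar\alpha_{L}^{-1}\gamma_{L}\bar y_{L}\rangle$.)

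The genuine gap is the step you yourself name as the ``technical heart'' and then leave undone: identifying the $E_{1}$-page of the horizontal filtration with forms on $T$ valued in the fiberwise cohomology. Condition (5) is a statement about one fiber with trivial coefficients; what your argument needs is that the inclusion of the subcomplex generated by $C^{\infty}(T)$ and the twisted invariant vertical frame into the full associated-graded complex, with its fiberwise $\bar\partial$, is a quasi-isomorphism of complexes of $C^{\infty}(T)$-modules --- i.e., that the fiberwise statement holds uniformly over the base and is compatible with the character-twisted, non-holomorphic identifications between fibers. That requires a real argument (a fiberwise Hodge decomposition with smoothly varying harmonic projectors, or an equivalent elliptic/functional-analytic device), and it is precisely the content that makes \cite{Kd} a paper rather than a remark; without it your text is a plan, not a proof. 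Two smaller items you should also make explicit: that the filtration by total horizontal degree is in fact preserved by $\bar\partial$ (it is, since $\bar\partial$ applied to functions and to the twisted vertical frame elements only adds horizontal or vertical antiholomorphic factors, never removes horizontal ones), and that the generators of $B^{\ast,\ast}_{\Gamma}$ are genuinely $\Gamma$-invariant and $\bar\partial$-closed precisely because of the condition $(\beta_{J}\gamma_{L})_{\vert_{\Gamma}}=1$ and the holomorphy of $\alpha_{J}^{-1}\beta_{J}$ and $\bar\alpha_{L}^{-1}\gamma_{L}$, so that the comparison of bounded filtered complexes applies on the subcomplex side.
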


\section{Main results}
Let $G$ be a Lie group as in Assumption \ref{as}.
This assumption is a special case of Assumption \ref{Ass} such that $(N,J)$  is a complex abelian Lie group.
We have 
\[\bigwedge ^{p,q}\g^{\ast}=\bigwedge ^{p}\langle dz_{1},\dots ,dz_{n}, \alpha^{-1}_{1}dw_{1},\dots ,\alpha^{-1}_{m}dw_{m}\rangle\bigotimes \bigwedge ^{q}\langle \bar dz_{1},\dots ,\bar dz_{n}, \bar\alpha^{-1}_{1}\bar dw_{1},\dots ,\bar\alpha^{-1}_{m}\bar dw_{m}\rangle.
\]
By  Lemma \ref{charr}, take the unique unitary characters $\beta_{i}$ and $\gamma_{i}$ on $\C^{n}$ such that $\alpha_{i}\beta_{i}^{-1}$ and $\bar\alpha\gamma^{-1}_{i}$ are holomorphic.
Consider the differential bigraded subalgebra $B_{\Gamma}^{\ast,\ast}$ of the Dolbeault complex $(A^{\ast,\ast}(G/\Gamma),\bar\partial)$ given by
\[B^{p,q}_{\Gamma}=\left\langle dz_{I}\wedge \alpha^{-1}_{J} \beta_{J}dw_{J}\wedge d\bar z_{K} \wedge \bar\alpha^{-1}_{L}\gamma_{L} d\bar w_{L}{\Big \vert} \begin{array}{cc}\vert I\vert+\vert J\vert=p,\, \vert K\vert+\vert L\vert=q \\ (\beta_{J}\gamma_{L})_{\vert_{\Gamma}}=1\end{array}\right\rangle.
\]
Then by Theorem \ref{CORR}, the incluison $B_{\Gamma}^{\ast,\ast}\subset A^{\ast,\ast}(G/\Gamma)$ induces a  cohomology isomorphism.

We prove:
\begin{proposition}\label{Mt}
Let $G$ be a Lie group as in Assumption \ref{as}.
Consider   the left-invariant Hermittian metric
$g=\sum dz_{i}  d\bar z_{i}+  \sum \alpha^{-1}_{i}\bar\alpha^{-1}_{i}dw_{i}d\bar w_{i}$.
Then the space ${\mathcal H}^{\ast,\ast}_{\bar \partial}(G/\Gamma)$is equal to the space $B^{\ast,\ast}_{\Gamma}$.
\end{proposition}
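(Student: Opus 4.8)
The plan is to show that every element of $B^{\ast,\ast}_{\Gamma}$ is $\bar\partial$-harmonic for the metric $g$, and then invoke Theorem \ref{CORR} to conclude that these exhaust the harmonic space. Since $B^{\ast,\ast}_{\Gamma}$ is a finite-dimensional subcomplex of $(A^{\ast,\ast}(G/\Gamma),\bar\partial)$ whose inclusion is a cohomology isomorphism, and since harmonic forms are unique representatives, it suffices to prove the two-sided inequality $\dim {\mathcal H}^{p,q}_{\bar\partial}(G/\Gamma)=\dim H^{p,q}_{\bar\partial}(B^{\ast,\ast}_{\Gamma})$ together with $B^{p,q}_{\Gamma}\subseteq {\mathcal H}^{p,q}_{\bar\partial}(G/\Gamma)$; the first holds by Theorem \ref{CORR}, so the whole statement reduces to the inclusion $B^{p,q}_{\Gamma}\subseteq {\mathcal H}^{p,q}_{\bar\partial}(G/\Gamma)$, i.e. to checking that each generator $\eta = dz_{I}\wedge \alpha^{-1}_{J}\beta_{J}dw_{J}\wedge d\bar z_{K}\wedge \bar\alpha^{-1}_{L}\gamma_{L}d\bar w_{L}$ with $(\beta_{J}\gamma_{L})_{\vert_\Gamma}=1$ satisfies $\bar\partial\eta=0$ and $\bar\partial^{\ast}\eta=0$.

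First I would record that, because $\alpha_{i}\beta_{i}^{-1}$ is holomorphic and $\beta_i$ is unitary, one has $\alpha_i^{-1}\beta_i = \overline{\alpha_i^{-1}\beta_i}^{-1}\cdot|\beta_i|^2\cdot(\text{holomorphic factor})$ — more precisely the forms $\alpha_i^{-1}\beta_i\,dw_i$ and their conjugates have a controlled $\bar\partial$: since $\alpha_i\beta_i^{-1}$ is holomorphic, $\bar\partial(\alpha_i^{-1}\beta_i) = \alpha_i^{-1}\beta_i\cdot\bar\partial\log(\alpha_i^{-1}\beta_i)$, and $\log(\alpha_i^{-1}\beta_i) = \log\beta_i - \log\alpha_i$; the condition on $G$ forces this logarithmic derivative to be a constant $(0,1)$-form on $\C^n$, whence $\bar\partial(\alpha_i^{-1}\beta_i\,dw_i)$ is again a wedge of the standard invariant forms with constant coefficients. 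Running this through $\eta$, one sees $\bar\partial\eta$ is a constant-coefficient combination of forms of the same ``weight''; the condition $(\beta_{J}\gamma_{L})_{\vert_\Gamma}=1$, i.e. that the total character is trivial on $\Gamma$, is exactly what makes $\eta$ well-defined on $G/\Gamma$, and a weight-count shows $\bar\partial\eta$ either vanishes or lands outside $B^{\ast,\ast}_{\Gamma}$; I would check it vanishes by a direct computation of $d$ on the structure equations (since $N$ is abelian, $dw_i$ is closed and only the $\alpha_i^{-1}\beta_i$ factors contribute, and the unitary part is killed). For $\bar\partial^{\ast}\eta=0$ I would compute the Hodge star of $\eta$ with respect to $g$: the point of choosing $g=\sum dz_i d\bar z_i + \sum \alpha_i^{-1}\bar\alpha_i^{-1}dw_i d\bar w_i$ is that $\{dz_i,\alpha_i^{-1}dw_i\}$ is a unitary coframe, so $\ast\eta$ is again, up to sign and up to the unitary characters $\beta,\gamma$ (and their conjugates appearing through $\bar\alpha^{-1}\bar\beta^{-1}$), a generator of the same shape with complementary index sets — in particular $\ast\eta$ lies in a space of the same invariant type, and $\bar\partial(\ast\eta)=0$ by the same computation as for $\bar\partial\eta$. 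Then $\bar\partial^{\ast}\eta = -\ast\partial\ast\eta$ on $(p,q)$-forms, and since $\ast\eta$ is also closed under $\partial$ (again by the structure equations, $\partial$ of the conjugate-type generators vanishes by the mirror argument using that $\bar\alpha_i\bar\beta_i^{-1}$ — the conjugate of a holomorphic function — has purely $\partial$-logarithmic derivative, absorbed into constants), we get $\bar\partial^{\ast}\eta=0$.

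Hence $B^{p,q}_{\Gamma}\subseteq {\mathcal H}^{p,q}_{\bar\partial}(G/\Gamma)$; combined with $\dim B^{p,q}_{\Gamma}\ge \dim H^{p,q}_{\bar\partial}(B^{\ast,\ast}_{\Gamma})=\dim H^{p,q}_{\bar\partial}(G/\Gamma)=\dim {\mathcal H}^{p,q}_{\bar\partial}(G/\Gamma)$ (the differential of $B^{\ast,\ast}_\Gamma$ might be nonzero, but I claim it is in fact zero: the previous paragraph shows $\bar\partial$ kills every generator, so $\bar\partial\vert_{B^{\ast,\ast}_\Gamma}=0$ and thus $H^{p,q}_{\bar\partial}(B^{\ast,\ast}_\Gamma)=B^{p,q}_\Gamma$), we get $B^{p,q}_{\Gamma}= {\mathcal H}^{p,q}_{\bar\partial}(G/\Gamma)$.

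\textbf{Main obstacle.} The delicate point is the $\bar\partial^{\ast}$-closedness, which hinges on the interaction between the Hodge star of the specific metric $g$ and the characters: one must verify that $\ast$ sends a generator of $B^{\ast,\ast}_\Gamma$ to (a constant multiple of) another generator of $B^{\ast,\ast}_\Gamma$ — this requires that the complex-conjugate and the complementary-index bookkeeping of the unitary characters $\beta_J,\gamma_L$ (and the factors $\bar\alpha_i^{-1}\bar\beta_i^{-1}$ coming from the norms $\alpha_i^{-1}\bar\alpha_i^{-1}$ in $g$) close up correctly, in particular that the triviality condition $(\beta_J\gamma_L)_{\vert_\Gamma}=1$ is preserved. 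I expect this to be a matter of careful index manipulation rather than a conceptual difficulty, using Lemma \ref{charr} to keep the holomorphic/unitary split clean, but it is the step where sign and character errors are easiest to make.
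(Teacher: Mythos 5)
Your overall architecture is exactly the paper's: show every generator of $B^{\ast,\ast}_{\Gamma}$ is $\bar\partial$- and $\bar\partial^{\ast}$-closed, note that $\bar\partial\vert_{B}=0$ makes $H^{p,q}_{\bar\partial}(B^{\ast,\ast}_{\Gamma})=B^{p,q}_{\Gamma}$, and use Theorem \ref{CORR} for the dimension count. One remark before the main point: your treatment of $\bar\partial\eta=0$ is more roundabout than necessary and slightly off. Since $\alpha_{i}\beta_{i}^{-1}$ and $\bar\alpha_{i}\gamma_{i}^{-1}$ are nowhere-vanishing holomorphic functions, their reciprocals $\alpha_{i}^{-1}\beta_{i}$ and $\bar\alpha_{i}^{-1}\gamma_{i}$ are holomorphic, so every generator is a holomorphic function times a constant-coefficient monomial in the closed forms $dz,dw,d\bar z,d\bar w$, and $\bar\partial$ kills it outright. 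There is no ``constant $(0,1)$-form logarithmic derivative'' in play — and if there were a nonzero one, $\bar\partial\eta$ would \emph{not} vanish, so as written your argument does not yet give closedness. This is the one-line observation the paper uses.

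The substantive issue is the step you yourself flag as the main obstacle: that the Hodge star sends a generator of $B^{\ast,\ast}_{\Gamma}$ to a \emph{constant} multiple of another generator. Your sketch does not close this, and the mechanism you propose (``$\bar\alpha_{i}\bar\beta_{i}^{-1}$ is anti-holomorphic, absorbed into constants'') is not the relevant computation. Concretely: writing $\eta=\beta_{J}\gamma_{L}\cdot(\text{monomial in the unitary coframe } dz_{i},\ \alpha_{j}^{-1}dw_{j})$, the anti-linear star gives $\bar\ast_{g}\eta=\pm\,\overline{\beta_{J}\gamma_{L}}$ times the complementary coframe monomial, which equals $\pm\bigl(\beta_{J}\gamma_{L}\beta_{\hat J}\gamma_{\hat L}\bigr)^{-1}=\pm\bigl(\prod_{i=1}^{m}\beta_{i}\gamma_{i}\bigr)^{-1}$ times the complementary generator of $B$ (here $\hat J,\hat L$ are complementary index sets). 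So the whole step reduces to the triviality of the single unitary character $\prod_{i}\beta_{i}\gamma_{i}$, which holds iff $\prod_{i}\vert\alpha_{i}\vert\equiv 1$, i.e.\ iff $G$ is unimodular — true here only because $G$ admits a lattice. Without that input the claim fails, and your index bookkeeping would not detect the nonconstant global factor. The paper does not prove this either: it cites \cite{Kd} for $\bar\ast_{g}(B^{p,q})\subset B^{n+m-p,n+m-q}$ (the exponents in the paper's display are a typo) and then concludes $\bar\partial^{\ast}B=-\bar\ast_{g}\bar\partial\bar\ast_{g}B=0$; your $\C$-linear-star version $\bar\partial^{\ast}=-\ast\partial\ast$ is equivalent, since $\partial(\ast\eta)=\overline{\bar\partial(\bar\ast_{g}\eta)}$. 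Once the star-preservation of $B$ is supplied, the rest of your argument is correct.
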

\begin{proof}

Let 
\[\bar\ast_{g}A^{p,q}(G/\Gamma)\to A^{n+m-p,n+m-q}(G/\Gamma)\]
 be the $\C$-anti-linear Hodge star operator of $g$ on $A^{\ast,\ast}(G/\Gamma)$.
Then we have $\bar\ast_{g}(B^{p,q})\subset B^{p,q}$ (see \cite{Kd}).
For each multi-indices $J,L \subset \{1,\dots, m\}$, $\alpha^{-1}_{J} \beta_{J}$ and  $\bar\alpha^{-1}_{L}\gamma _{L}$ are holomorphic.
Hence we have  $\bar\partial B^{p,q}_{\Gamma}=0$ 
Since the  inclusion $B^{\ast,\ast}\subset A^{\ast,\ast}(G/\Gamma)$ induces a cohomology isomorphism, we have 
\[B^{\ast,\ast}={\mathcal H}_{\bar\partial}^{\ast,\ast}(G/\Gamma,g).\]
\end{proof}

Since $B^{\ast,\ast}$ is closed under the wedge product, we have:
\begin{corollary}\label{CFo}
Let $G$ be a Lie group as in Assumption \ref{as}.
Consider   the left-invariant Hermittian metric
$g=\sum dz_{i}  d\bar z_{i}+  \sum \alpha^{-1}_{i}\bar\alpha^{-1}_{i}dw_{i}d\bar w_{i}$.
Then the space ${\mathcal H}^{\ast,\ast}_{\bar \partial}(G/\Gamma)$ of complex-valued  $\bar \partial$-harmonic forms  is closed under the wedge product.
\end{corollary}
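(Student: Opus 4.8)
The plan is to read this off from Proposition \ref{Mt}. That proposition identifies ${\mathcal H}^{\ast,\ast}_{\bar\partial}(G/\Gamma,g)$ with the explicit subspace $B^{\ast,\ast}_{\Gamma}\subset A^{\ast,\ast}(G/\Gamma)$, so the only thing left to verify is that $B^{\ast,\ast}_{\Gamma}$ is closed under the wedge product, i.e.\ that it is a subalgebra and not merely a bigraded subspace. All the analytic content (the choice of metric, the computation of the Hodge star, $\bar\partial$-harmonicity) is already absorbed into Proposition \ref{Mt}; what remains is purely a bookkeeping check on multi-indices and products of characters.

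To carry this out I would work with the given spanning set. Writing, for a multi-index $J\subset\{1,\dots,m\}$, $\alpha_{J}=\prod_{j\in J}\alpha_{j}$, $\beta_{J}=\prod_{j\in J}\beta_{j}$, $dw_{J}$ for the corresponding wedge of the $dw_{j}$, and analogously $\gamma_{L}$, $d\bar w_{L}$, $dz_{I}$, $d\bar z_{K}$, a generator of $B^{p,q}_{\Gamma}$ has the form $dz_{I}\wedge\alpha^{-1}_{J}\beta_{J}dw_{J}\wedge d\bar z_{K}\wedge\bar\alpha^{-1}_{L}\gamma_{L}d\bar w_{L}$ subject to $(\beta_{J}\gamma_{L})|_{\Gamma}=1$. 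Now wedge two such generators, with index sets $(I,J,K,L)$ and $(I',J',K',L')$: if any of $I\cap I'$, $J\cap J'$, $K\cap K'$, $L\cap L'$ is nonempty the product vanishes and there is nothing to prove. Otherwise the coefficient characters are functions, hence commute past the forms, and by multiplicativity $\alpha^{-1}_{J}\alpha^{-1}_{J'}=\alpha^{-1}_{J\cup J'}$, $\beta_{J}\beta_{J'}=\beta_{J\cup J'}$ (and likewise for $\bar\alpha_{L},\gamma_{L}$), so the product equals, up to sign, the generator attached to $(I\cup I',J\cup J',K\cup K',L\cup L')$. Finally the defining constraint is itself multiplicative, $(\beta_{J\cup J'}\gamma_{L\cup L'})|_{\Gamma}=(\beta_{J}\gamma_{L})|_{\Gamma}\,(\beta_{J'}\gamma_{L'})|_{\Gamma}=1$, so the product again lies in $B^{\ast,\ast}_{\Gamma}$.

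This settles the statement: $B^{\ast,\ast}_{\Gamma}$ is a subalgebra of $A^{\ast,\ast}(G/\Gamma)$, and combined with Proposition \ref{Mt} this shows ${\mathcal H}^{\ast,\ast}_{\bar\partial}(G/\Gamma,g)$ is closed under the wedge product. I do not expect any genuine obstacle here once Proposition \ref{Mt} is in hand; the single point deserving (minor) care is exactly the multiplicativity of the characters $\alpha_{J}$, $\beta_{J}$, $\gamma_{L}$ in the index sets, since it is this that guarantees the product of two basis forms of $B^{\ast,\ast}_{\Gamma}$ is again a basis form of $B^{\ast,\ast}_{\Gamma}$ rather than merely some element of $A^{\ast,\ast}(G/\Gamma)$.
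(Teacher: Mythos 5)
Your proposal is correct and follows the paper's own route exactly: the paper deduces the corollary from Proposition \ref{Mt} together with the (asserted) fact that $B^{\ast,\ast}_{\Gamma}$ is closed under the wedge product, which is precisely what you verify via the multiplicativity of the characters $\alpha_{J},\beta_{J},\gamma_{L}$ in the index sets. Your write-up merely makes explicit the bookkeeping that the paper leaves as a one-line remark.
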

\begin{remark}
If a compact oriented manifold $M$ admits a  Riemaniann metric such that  all products of harmonic forms are again harmonic, we call $M$  geometrically formal.
Kotschick's nice  work in \cite{Kot} stimulates us  to consider  geometrical formality.
In \cite{Kg}, we show that a solvmanifold $G/\Gamma$ such that $G=\R^{n}\ltimes_{\phi} \R^{m}$ with a semisimple action $\phi$ is geometrically formal.
Hence Corollary \ref{CFo} is a complex analogue of such result.
\end{remark}

We consider the following conditions:
\begin{condition}\label{cond}
 For each multi-indices $J,L \subset \{1,\dots, m\}$, $(\beta_{J}\gamma_{L})_{\vert_{\Gamma}}=\bf 1$  if and only if $\alpha_{J}\bar \alpha_{L}=\bf 1$ where $\bf 1$ is the trivial character.
\end{condition}
In this condition,  we have
\begin{multline*}
B^{p,q}=\left\langle dz_{I}\wedge\alpha^{-1}_{J} \beta_{J}dw_{J}\wedge d\bar z_{K}\wedge \bar\alpha^{-1}_{L}\gamma _{L} d\bar w_{L}{\Big \vert} \begin{array}{cc}\vert I\vert+\vert J\vert=p,\, \vert K\vert+\vert L\vert=q \\ \alpha_{J}\bar\alpha_{L}=1\end{array}\right\rangle\\
=\left\langle dz_{I}\wedge  dw_{J}\wedge d\bar z_{K}\wedge d\bar w_{L}{\Big \vert} \begin{array}{cc}\vert I\vert+\vert J\vert=p,\, \vert K\vert+\vert L\vert=q \\ \alpha_{J}\bar\alpha_{L}=1\end{array}\right\rangle.
\end{multline*}
Hence we have
\begin{multline*}
\overline{B^{p,q}}=\left\langle  d\bar z_{I}\wedge d \bar w_{J}\wedge d z_{K}\wedge d  w_{L}{\Big \vert} \begin{array}{cc}\vert I\vert+\vert J\vert=p,\, \vert K\vert+\vert L\vert=q \\ \alpha_{J}\bar\alpha_{L}=1\end{array}\right\rangle\\
=\left\langle   d z_{K}\wedge d  w_{L}\wedge d\bar z_{I}\wedge d \bar w_{J}{\Big \vert} \begin{array}{cc}\vert I\vert+\vert J\vert=p,\, \vert K\vert+\vert L\vert=q \\ \bar\alpha_{J}\alpha_{L}=1\end{array}\right\rangle=B^{q,p}.
\end{multline*}
We prove:
\begin{theorem}
Let $G$ be a Lie group as in Assumption \ref{as}.
We also suppose Condition \ref{cond}.
Consider   the left-invariant Hermittian metric
$g=\sum dz_{i}  d\bar z_{i}+  \sum \alpha^{-1}_{i}\bar\alpha^{-1}_{i}dw_{i}d\bar w_{i}$.
Then we have
\[ \overline{{\mathcal H}_{\bar \partial}^{p,q}(G/\Gamma,g)}={\mathcal H}_{\bar \partial}^{q,p}(G/\Gamma,g)\]
and
\[\bigoplus_{p+q=r} {\mathcal H}_{\bar \partial}^{p,q}(G/\Gamma,g) ={\mathcal H}^{r}_{d}(G/\Gamma,g).
\]

\end{theorem}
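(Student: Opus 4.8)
The plan is to read everything off Proposition~\ref{Mt} together with the explicit shape of $B^{\ast,\ast}$ under Condition~\ref{cond}, supplemented by one dimension count. Throughout write $B^{r}=\bigoplus_{p+q=r}B^{p,q}$. The Hodge symmetry is then immediate: by Proposition~\ref{Mt} we have ${\mathcal H}^{p,q}_{\bar\partial}(G/\Gamma,g)=B^{p,q}$, while the computation carried out just before the statement gives $\overline{B^{p,q}}=B^{q,p}$ under Condition~\ref{cond}; combining the two identities yields $\overline{{\mathcal H}^{p,q}_{\bar\partial}(G/\Gamma,g)}={\mathcal H}^{q,p}_{\bar\partial}(G/\Gamma,g)$.

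For the decomposition I would first prove the inclusion $B^{r}\subseteq{\mathcal H}^{r}_{d}(G/\Gamma,g)$, i.e.\ that every element of $B^{\ast,\ast}$ is $d$-harmonic. Under Condition~\ref{cond}, $B^{p,q}$ is spanned by the forms $dz_{I}\wedge dw_{J}\wedge d\bar z_{K}\wedge d\bar w_{L}$, each of which is a wedge of $d$-closed (indeed exact on $G$) one-forms; hence $\partial B^{\ast,\ast}=\bar\partial B^{\ast,\ast}=0$, and in particular $d\omega=0$ for all $\omega\in B^{\ast,\ast}$. On the other hand $\bar\ast_{g}$ maps $B^{\ast,\ast}$ into itself, by the Hodge-star computation used in the proof of Proposition~\ref{Mt}; so for $\omega\in B^{\ast,\ast}$ the forms $\partial^{\ast}\omega$ and $\bar\partial^{\ast}\omega$, being (up to sign) $\bar\ast_{g}\partial\bar\ast_{g}\omega$ and $\bar\ast_{g}\bar\partial\bar\ast_{g}\omega$, vanish because $\bar\ast_{g}\omega\in B^{\ast,\ast}$ is both $\partial$- and $\bar\partial$-closed. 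Therefore $d^{\ast}\omega=\partial^{\ast}\omega+\bar\partial^{\ast}\omega=0$, so $\omega\in{\mathcal H}^{r}_{d}(G/\Gamma,g)$. Via Proposition~\ref{Mt} this gives $\bigoplus_{p+q=r}{\mathcal H}^{p,q}_{\bar\partial}(G/\Gamma,g)=B^{r}\subseteq{\mathcal H}^{r}_{d}(G/\Gamma,g)$.

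For the reverse inclusion I would avoid classifying all $d$-harmonic forms and argue by dimensions. Since the Hodge isomorphism ${\mathcal H}^{r}_{d}(G/\Gamma,g)\cong H^{r}(G/\Gamma)$ restricts to an injection on the subspace $B^{r}$, we get $\dim B^{r}\le \dim H^{r}(G/\Gamma)$. Conversely, by Proposition~\ref{Mt},
\[\dim B^{r}=\sum_{p+q=r}\dim{\mathcal H}^{p,q}_{\bar\partial}(G/\Gamma,g)=\sum_{p+q=r}\dim H^{p,q}_{\bar\partial}(G/\Gamma)\ge \dim H^{r}(G/\Gamma),\]
the last inequality being the Fr\"olicher inequality coming from the Hodge--de Rham spectral sequence. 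Hence $\dim B^{r}=\dim H^{r}(G/\Gamma)=\dim{\mathcal H}^{r}_{d}(G/\Gamma,g)$, so the inclusion $B^{r}\subseteq{\mathcal H}^{r}_{d}(G/\Gamma,g)$ of the previous step is an equality; equivalently, the Fr\"olicher spectral sequence of $G/\Gamma$ degenerates at $E_{1}$. Combined with ${\mathcal H}^{p,q}_{\bar\partial}(G/\Gamma,g)=B^{p,q}$ this yields $\bigoplus_{p+q=r}{\mathcal H}^{p,q}_{\bar\partial}(G/\Gamma,g)={\mathcal H}^{r}_{d}(G/\Gamma,g)$.

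The only step that is not purely formal is the inclusion $B^{r}\subseteq{\mathcal H}^{r}_{d}(G/\Gamma,g)$: it rests on the two structural properties of $B^{\ast,\ast}$ valid under Condition~\ref{cond} — that its generators are $d$-closed forms, and that it is preserved by $\bar\ast_{g}$ (the latter being the Hodge-star computation imported from \cite{Kd}). Granting these, the $d$-harmonicity of elements of $B^{\ast,\ast}$ is automatic, and the equality with ${\mathcal H}^{r}_{d}(G/\Gamma,g)$ drops out of the cheap dimension squeeze against Fr\"olicher's inequality.
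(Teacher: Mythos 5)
Your proof is correct and follows essentially the same route as the paper: Hodge symmetry from $\overline{B^{p,q}}=B^{q,p}$ plus Proposition~\ref{Mt}, then the inclusion $\bigoplus_{p+q=r}B^{p,q}\subseteq{\mathcal H}^{r}_{d}(G/\Gamma,g)$ upgraded to an equality via Fr\"olicher $E_{1}$-degeneration. You merely make explicit two steps the paper leaves implicit — the verification that elements of $B^{\ast,\ast}$ are $d$-harmonic (via $d$-closedness of the constant-coefficient generators and $\bar\ast_{g}$-invariance of $B^{\ast,\ast}$) and the dimension squeeze against the Fr\"olicher inequality — both of which are exactly what the paper's terse assertions rely on.
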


\begin{proof}

By $\overline{B^{p,q}}=B^{q,p}$ and Proposition \ref{Mt} we have 
\[ \overline{{\mathcal H}_{\bar \partial}^{p,q}(G/\Gamma,g)}={\mathcal H}_{\bar \partial}^{q,p}(G/\Gamma,g).\]

By
\[B^{p,q}=\left\langle dz_{I}\wedge  dw_{J}\wedge d\bar z_{K}\wedge d\bar w_{L}{\Big \vert} \begin{array}{cc}\vert I\vert+\vert J\vert=p,\, \vert K\vert+\vert L\vert=q \\ (\alpha_{J}\bar\alpha_{L})_{\vert_{\Gamma}}=1\end{array}\right\rangle,\]
all the elements of $B^{\ast,\ast}$ are also $d$-harmonic.
Thus by Proposition \ref{Mt} we have
\[\bigoplus_{p+q=r} {\mathcal H}_{\bar\partial}^{p,q}(G/\Gamma,g)= \bigoplus_{p+q=r} B^{p,q}\subset {\mathcal H}^{r}_{d}(G/\Gamma,g).
\]
This implies that the Fr\"olicher spectral sequence degenerates at $E_{1}$.
Hence we have 
\[\bigoplus_{p+q=r} {\mathcal H}_{\bar \partial}^{p,q}(G/\Gamma,g) = \bigoplus_{p+q=r} B^{p,q}={\mathcal H}^{r}_{d}(G/\Gamma,g).
\]
Hence the theorem follows.

\end{proof}

\section{Examples}
In \cite{Hn}, Hasegawa  showed that  a simply connected solvable Lie group $G$ with a  lattice $\Gamma$ such that $G/\Gamma$ admits a K\"ahler structure can be written as $G=\R^{2k}\ltimes _{\phi}\C^{l}$ such that
\[\phi(t_{j})((z_{1},\dots ,z_{l}))=(e^{\sqrt{-1}\theta^{j}_{1}t_{j}}z_{1},\dots ,e^{\sqrt{-1}\theta^{j}_{l}t_{j}}z_{l}),
\]
where each $e^{\sqrt{-1}\theta^{j}_{i}}$ is a root of unity.
Hence for $G=\C^{n}\ltimes_{\phi} \C^{m}$ with a semi-simple action $\phi: \C^{n}\to GL_{m}(\C)$, a compact solvmanifold $G/\Gamma$  admits no K\"ahler metric if $\phi$ has a non-unitary eigencharacter.
In particular if $G$ is completely solvable, then a compact solvmanifold $G/\Gamma$  admits no K\"ahler metric.
Hence solvmanifolds $G/\Gamma$ such that $G$ are completely solvable Lie groups as in Assumption \ref{as} and Condition \ref{cond} are non-K\"ahler  Hermittian  solvmanifolds  satisfying the Hodge symmetry and decomposition.
We give such examples. 

\begin{example}
Let $G=\C\ltimes_{\phi} \C^{2m}$ such that 
\[\phi(x+\sqrt{-1}y)(w_{1},w_{2}\dots,w_{2m-1}, w_{2m})
=(e^{a_{1}x}w_{1},e^{-a_{1}x}w_{2},\dots, e^{a_{m}x}w_{2m-1}, e^{-a_{m}x}w_{2m})
\]
for integers $a_{i}\not=0$.
We have $\beta_{2i-1}=e^{-a_{i}\pi\sqrt{-1}y}$, $\beta_{2i}=e^{a_{i}\pi\sqrt{-1}y}$.
We can write $G=\R\times (\R\ltimes_{\phi} \C^{2m})$.
$G$ has a lattice $\Gamma=t\Z\times \Delta$ where $\Delta$ is a lattice in $\R\ltimes_{\phi} \C^{2m}$ for $t>0$.
Then $B^{p,q}$ varies for the choice $t>0$.
Consider the case $t\not=\frac{r}{s}\pi$ for any $r,s\in \Z$. 
Then $(\beta_{J}\beta_{L})_{\vert_{\Gamma}}\not=\bf 1$ for any multi-indices $J,L$ such that $\alpha_{J}\alpha_{L}\not=1$.
Hence Condition  \ref{cond}  hold. 
Thus the complex solvamanifold $G/\Gamma$  with a Hermitian metric $g=\sum dz_{1}  d\bar z_{1}+  \sum( e^{-2a_{i}x}dw_{2i-1}d\bar w_{2i-1}+e^{2a_{i}x}dw_{2i}d\bar w_{2i})$ satisfies the Hodge symmetry and decomposition
but $G/\Gamma$ does not admits a K\"ahler metric.

\end{example}

\begin{example}

Let $H=\R^{n}\ltimes_{\psi} \R^{n+1}$ such that 
\[\psi(x_{1},\dots, x_{n})(y_{1},\dots, y_{n}, y_{n+1})=(e^{x_{1}}y_{1},\dots e^{x_{n}}y_{n}, e^{-x_{1}-\dots-x_{n}}y_{n+1}).
\]
Then a lattice of $H$ is constructed by a totally real algebraic number field (see \cite{TY}).
Consider $G=\C^{n}\ltimes_{\phi}\C^{n+1}$ such that 
\[\phi(z_{1},\dots, z_{n})(w_{1},\dots, w_{n}, w_{n+1})=(e^{x_{1}}w_{1},\dots e^{x_{n}}w_{n}, e^{-x_{1}-\dots-x_{n}}w_{n+1})
\]
for complex coordinate $(z_{1}=x_{1}+\sqrt{-1}y_{1},\dots ,z_{1}=x_{n}+\sqrt{-1}y_{n},w_{1},\dots, w_{n+1}  )$.
Since we have $G=\R^{n}\times (\R^{n}\ltimes_{\psi\oplus \psi} \R^{2n+2})$, the Lie group $G$ admits a lattice $\Gamma=(c_{1}\Z\times\dots \times c_{n}\Z)\times  \Delta$ where $ \Delta$ is a lattice of $\R^{n}\ltimes_{\psi\oplus \psi} \R^{2n+2}$.
In this case for a coordinate $(z_{1}=x_{1}+\sqrt{-1}y_{1},\dots ,z_{1}=x_{n}+\sqrt{-1}y_{n},w_{1},\dots, w_{n+1}  )$ of $\C^{n}$, we have $\beta_{i}=e^{-\sqrt{-1}y}$ for $1\le i\le n$ and $\beta_{n+1}=e^{\sqrt{-1}(y_{1}+\dots +y_{n})}$.
Hence if $c_{1},\dots,c_{n}$ are not rational numbers, then  Condition  \ref{cond}  hold.
Thus the complex solvamanifold $G/\Gamma$  with a Hermitian metric 
\[g= dz_{1}  d\bar z_{1}+\dots+dz_{n}  d\bar z_{n} +   e^{-2x_{1}} dw_{1}d\bar w_{1}+\dots+  e^{-2x_{n}} dw_{n}d\bar w_{n}+e^{2x_{1}+\dots +2x_{n}}dw_{n+1}d\bar w_{n+1} \]
 satisfies the Hodge symmetry and decomposition
but $G/\Gamma$ does not admits a K\"ahler metric.
\end{example}

 {\bf  Acknowledgements.} 

The author would like to express his gratitude to   Toshitake Kohno for helpful suggestions and stimulating discussions.
This research is supported by JSPS Research Fellowships for Young Scientists.

\end{document}